\renewcommand{\ge}{\geqslant}
\newtheorem{theorem}{Theorem}
\newtheorem{corollary}[theorem]{Corollary}
\newtheorem{lemma}{Lemma}
\theoremstyle{remark}
\newtheorem{definition}{Definition}
\newtheorem{remark}{Remark}
\title{A new simple family of non-periodic tilings with square tiles}
\author{Nikolay Vereshchagin\\
Moscow State University, HSE University, Yandex%
\thanks{This paper was prepared within the framework of the HSE University Basic Research Program (project no.~HSE-BR-2025-24).}}
\begin{document}

\maketitle
\begin{abstract}
We define a new family of non-periodic tilings with square tiles that is mutually locally derivable with some family of tilings with isosceles right triangles. Both families are defined by simple local rules, and the proof of their non-periodicity is as simple as that of the non-periodicity of Robinson's tilings. We also relate the construction to the classical chair tiling: forgetting the decoration, our tilings map almost one-to-one onto the chair, our substitution is essentially a square root of the chair substitution, and our local rules are perfect, defining exactly the family of substitution tilings.
\end{abstract}  
  \textbf{Keywords:}
non-periodic tilings, Domino problem, aperiodic tile sets, substitution tilings, chair tiling, matching rules

\section{Introduction}

The study of non-periodic tilings of the plane by square tiles is motivated by the Domino problem: 
to construct an algorithm that, given local rules to attach  tiles, 
finds out whether it is possible to tile the entire plane according to these rules. 
The non-existence of such an algorithm was proved by Berger in~\cite{berger}. 
The main step in the proof is the construction of local rules such that any tiling according to these rules 
is non-periodic. The construction of such a family was subsequently 
simplified by Robinson~\cite{rob}. An even simpler construction using the 
same idea is given in~\cite{dls}.

In this paper, we propose a new such family. 
The local rules for these tilings and the proof of non-periodicity are approximately as easy as 
those in~\cite{rob} and~\cite{dls}. The novel thing is that our tilings are based
on tilings by isosceles right triangles; substitution tilings by right
triangles with inflation factor $\sqrt2$ go back to Danzer and van
Ophuysen~\cite{lo}, and local rules for related triangle tilings were considered
by the author in~\cite{ver}.
In fact, we first construct a family of non-periodic tilings with
triangles for which the local rules ensure that the triangles
can be grouped  into square tiles.
Then we rewrite the local rules for triangle tilings as local rules to attach resulting square tiles. 
Similarly, tilings by isosceles triangles with angles divisible by $36^\circ$ yield Penrose tilings P3 by rhombuses, which is called the Robinson decomposition of Penrose tilings. To use conventional terminology,
our square tilings are mutually locally derivable to triangular tilings.

Our square tilings are closely related to the classical chair tiling: forgetting
the colors and the orientations of the horizontal and vertical edges, every
correct tiling maps almost one-to-one onto the arrowed version of the chair, and
our substitution turns out to be a kind of square root of the chair substitution
(Remark~\ref{rem-commute}). In particular our family gives local matching rules that
enforce the chair, much as the Trilobite and Crab of Goodman-Strauss~\cite{gscrab}
do through a different decoration; our rules, moreover, are \emph{perfect}, defining
exactly the family of substitution tilings. We discuss these connections in
Section~\ref{s4}.

In the next section, we define our family of tilings with square tiles, and in 
Section~\ref{s3} we define tilings with triangles and provide proofs. 
The proof of the non-periodicity of tilings by triangles (Theorem~\ref{th3}), 
and hence of the non-periodicity of the family of tilings by square tiles (Theorem~\ref{th1}), 
uses a well-known technique based on self-similarity. 
Namely, we consider the following composition scheme: 
two triangles with a common leg are combined into one triangle. 
Applying this scheme to any tiling satisfying local rules we obtain a new tiling that again satisfies the same rules. 
It is well known that any family of tilings with such a property can only consist of non-periodic tilings; the general statement that the unique composition property implies non-periodicity is due to Solomyak~\cite{solomyak}.
Moreover, we prove that all tilings in our family are substitution tilings (Theorem~\ref{th4});
together with the converse this shows that the local rules define \emph{exactly} the substitution
tilings (Corollary~\ref{cor-perfect}). Here a substitution tiling is one each of whose finite
fragments occurs in some supertile, the supertiles being the tilings obtained from single tiles
by several decompositions.

\section{Tilings with square tiles} \label{s2}

Our tiles are shown in Fig.~\ref{pic46}. 
\begin{figure}[ht]
\begin{center}
\includegraphics[scale=2]{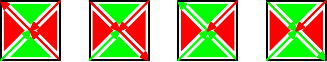}
\end{center}
\caption{The tiles. }
\label{pic46}
\end{figure}
The sides drawn in black have one of two orientations and one of two colors, 
green or red. Thus, each of the four pictures depicts 
$4^4=256$ different tiles. In addition, these tiles can be rotated by angles that are multiples of $90^\circ$.%
\footnote{It may seem that the second tile is obtained from the first by $180^\circ$
rotation, and the fourth from the third. However, this is not the case --- 
with such a rotation, the slanted arrow coming from the upper right corner would change its color.} 
Tiles cannot be flipped. 
Thus, the total number of tiles is $4^6=4096$. A specific tile is shown in Fig.~\ref{pic45}.
\begin{figure}[ht]
\begin{center}
\includegraphics[scale=2]{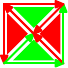}
\end{center}
\caption{A tile. }\label{pic45}
\end{figure}

The local rules:
\begin{enumerate}
\item Tiles are attached only side-to-side, and the shared sides must have matching colors and orientations. 
\item Adjacent triangles 
must have different colors. 
\item We will call  the \emph{crossing} centered at a given vertex the set of arrows entering 
or leaving this vertex.  The rule restricts possible crossings: only the crossings shown
in Fig.~\ref{pic44} are legal, as well as crossings obtained from them by rotations by angles that are multiples of $45^\circ$. 
\begin{figure}[ht]
\begin{center}
\includegraphics[scale=1]{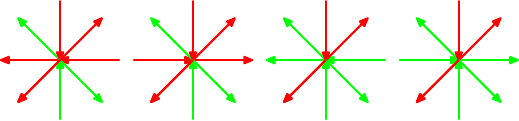}
\end{center}
\caption{Legal crossings. }\label{pic44}
\end{figure}
Legal crossings can be described as follows: 
a pair of arrows passes through the center of the crossing without changing color or direction. 
This pair of arrows is called the \emph{axis of the crossing}. 
There are also two arrows of different colors perpendicular to the axis, 
entering the center. In addition to these four arrows, there are four more, coming out of the center at an angle of 
$45^\circ$ to the axis. If you go to the center of the crossing perpendicular to the axis, 
then the green arrow goes to the right, and the red arrow to the left. 
\end{enumerate}
\begin{definition}
A \emph{tiling} is a set of tiles whose interiors are pair wise disjoint.
A  \emph{correct tiling by square tiles} is
a tiling of the whole plane with the above tiles that satisfies these rules. 
\end{definition}

\begin{theorem}\label{th1}
There is a correct tiling with square tiles, and any such tiling is non-periodic. 
\end{theorem}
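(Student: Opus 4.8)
The plan is to deduce both assertions of Theorem~\ref{th1} from the analysis of the triangle tilings of Section~\ref{s3}. Every correct square tiling carries, along the diagonals drawn in its tiles, an underlying tiling by two isosceles right triangles per square, and since the square rules are nothing but a transcription of the triangle rules, this underlying triangle tiling is correct in the sense of Section~\ref{s3}. Conversely --- and this is precisely what rule~(2) and the crossing rule~(3) are designed for --- in a correct triangle tiling the triangles can be reassembled into square tiles, so the two families are mutually locally derivable. Passing from a square tiling to its underlying triangle tiling, and back, commutes with translations, so a square tiling is periodic if and only if its underlying triangle tiling is. Hence Theorem~\ref{th1} reduces to two statements about triangle tilings: a correct one exists, and none of them is periodic; the second is Theorem~\ref{th3}.

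For existence I would build a tiling from supertiles produced by running the composition scheme in reverse. A level-$0$ supertile is a single triangle; a level-$(n+1)$ supertile is a triangular patch made of two level-$n$ supertiles glued along a leg, so that a level-$n$ supertile consists of $2^n$ triangles and has diameter $(\sqrt 2)^n$ times that of one triangle. One checks that the colours and arrows --- in particular those on the legs along which supertiles are glued and at the new central vertices --- can be chosen so that all three local rules hold throughout a supertile (this consistency is part of the substitution structure of Theorem~\ref{th4}). Since supertiles grow without bound, a routine compactness argument (König's lemma, or Tychonoff on the space of tilings) extracts a tiling of the whole plane that obeys the rules in every bounded region --- a correct triangle tiling; reassembling its triangles into squares then gives a correct square tiling.

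For non-periodicity I would use the self-similarity pointed out in the introduction. The composition scheme sends a correct triangle tiling $T$ to a correct triangle tiling $c(T)$ whose tiles are larger by the linear factor $\sqrt 2$, and $c$ commutes with translations. Suppose $T$ had a nonzero period $v$; then $v$ is a period of $c^n(T)$ for every $n$. A triangle with legs of length $\ell$ contains a disk of radius proportional to $\ell$, so once $n$ is large enough that this radius exceeds $|v|$, any tile $S$ of $c^n(T)$ and its translate $S+v$ are tiles of the same tiling with overlapping interiors, hence $S=S+v$; but then $v$ is a period of the bounded set $S$, so $v=0$, a contradiction. (This is the standard argument showing that a family of tilings closed under a scale-increasing composition can contain no periodic tiling.) Thus no correct triangle tiling, and therefore no correct square tiling, is periodic.

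The genuine difficulty, in both the existence step and the non-periodicity step, is hidden in the phrases ``the markings can be chosen'' and ``the composition scheme sends'': one must show that in \emph{every} correct triangle tiling the local rules force a unique grouping of the triangles into admissible pairs, coherently over the entire plane, and that this grouping, with the markings it inherits, is itself a correct tiling --- equivalently, that the crossings organise into nested squares at every scale. This is where the combinatorics of rules~(2) and~(3) is actually used; it is the heart of Theorem~\ref{th3}, and once it is in place the reduction above and the self-similarity argument finish Theorem~\ref{th1} with no further work.
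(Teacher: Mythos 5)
Your proposal is correct and follows essentially the same route as the paper: reduce to triangle tilings via the cutting/reassembly correspondence (the paper's Lemma~1), obtain existence from the correctness of supertiles, and obtain non-periodicity from the fact that composition preserves correctness and rescales any putative period (Theorems~\ref{th2} and~\ref{th3}). The only divergence is the limit step of the existence argument, where you invoke compactness/K\"onig to pass from arbitrarily large supertiles to a tiling of the plane, whereas the paper builds an explicit increasing tower $S_0\subset S_6\subset S_{12}\subset\cdots$ using the observation that a tile sits strictly inside its sixfold decomposition; both work.
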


The proof of the second part of this theorem uses the \emph{unique composition property}.
The idea is to group the tiles of a correct tiling into so-called macro-tiles, in a way that is unique.
Each macro-tile corresponds to some tile of the original set, 
and when macro-tiles are replaced by the corresponding tiles, the local rules are preserved.

This method will not be applied to the original tilings, 
but to tilings with isosceles right triangle obtained from the original tilings as follows. 
Let us cut each tile into 4 right-angled triangles along the diagonal arrows. 
We obtain a tiling with triangles. 
The original local rules become local rules for triangular tiles. 
To these rules, we add yet another rule that guarantees that triangles join into square tiles. 
In any tiling, according to the rules obtained, the tiles are divided into pairs 
of tiles with a common leg. Each such pair will correspond to one triangular tile, 
and when replacing it with the corresponding tile, the local rules will be preserved. 
This operation is called \emph{the composition}. Since the local rules are preserved, 
the triangles in the resulting tiling
should again join into square tiles. 
However, new square tiles can be composed of triangles from different square tiles in the original tiling. 
Therefore, composition can be defined only for tilings with triangles, but not for tilings with square tiles.

A similar technique is also used in the proof of the non-periodicity of Penrose tilings P3 by rhombuses of two shapes. 
The  rhombuses are also partitioned into triangles, which is called the Robinson decomposition.
Robinson triangles can be grouped into larger triangles,  which again  can be grouped into
rhombuses.

So, we move on to tilings with triangular tiles.

\section{Tilings with triangles}\label{s3}

\subsection{Tiles and local rules}
The tiles are isosceles right triangles colored red or green. 
Each side of a tile has an orientation and a color, 
either red or green. Thus each tile is specified by seven bits. 
In addition, tiles can be rotated by 90-degree angles, 
so the number of tiles is $2^9=512$ up to shift. 
Two examples of tiles are shown in Fig.~\ref{pic17}. 
\begin{figure}[t]
\begin{center}
\includegraphics[scale=.75]{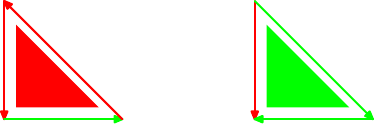}
\end{center}
\caption{Examples of triangular tiles.}
\label{pic17}
\end {figure}

Each correct tiling with square tiles corresponds to a tiling with triangles, 
which is obtained as follows. We cut each square tile diagonally into four triangles. 
This will result in a tiling with triangular tiles of the type described above. 
Obviously, it satisfies the following local rules:

\begin{enumerate}
\item The tiles are attached side by side, 
and the shared sides of adjacent tiles have matching colors and orientations. 
\item Any two triangles with a common side have different colors.
\item Only two types of crossings are allowed, $C_4$ and $C_8$
 (Fig.~\ref {pic0}).
\begin{figure}[t]
\begin{center}
\includegraphics[scale=1.5]{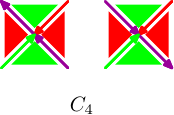}
\hskip .5cm
\includegraphics[scale=1.5]{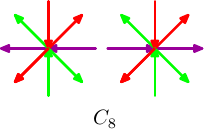}
\end{center}
\caption{Allowed crossings. 
The two sides painted purple must have the same color (red or green). 
Crossings of the form $C_4$ can be rotated by multiples of $90^\circ$, 
and crossings of the form $C_8$ can be rotated by multiples of $45^\circ$.
}\label{pic0}
\end{figure}
Crossings of the form $C_8$ are exactly legal crossings for tilings with square tiles. 
Crossings of the form $C_4$ appear at the centers of square tiles when they are cut into triangles.
\item
If we go to the center of the $C_4$ crossing perpendicular to the axis, then there should be a red triangle on the left, 
and a green triangle on the right. The axis of the $C_4$ crossing is defined similarly to the axis of $C_8$, 
as the pair of uni-color unidirectional arrows passing through the center.
\end{enumerate}

We will call tilings that satisfy these local rules \emph{correct triangle tilings}. 
For tilings of \emph{parts} of the plane, we require that the third and fourth conditions 
hold only for internal vertices.

\begin{lemma}
(a) If a correct tiling $T$ with square tiles is cut into triangles, 
then a correct tiling $T'$ with triangles is obtained. 
(b) Conversely, any correct triangular tiling $T'$of the plane can be obtained from some correct tiling $T$ with square 
tiles by cutting it into triangles. This tiling $T$ is unique.
\end{lemma}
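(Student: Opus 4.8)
\medskip

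\noindent\emph{Proof plan.}
For part~(a) I would simply run through the four triangle rules. Cutting a correct square tiling $T$ along the diagonal arrows of its tiles creates no new tile edges except the half-diagonals and no new vertices except the tile centers. Triangle rule~1 then follows from square rule~1 together with the trivial matching of the two halves of each diagonal arrow; triangle rule~2 is a restatement of square rule~2 (any two triangular regions sharing an edge have different colors); each former vertex of $T$ now carries a $C_8$ crossing, and these are by definition the legal crossings for square tilings; and each tile center now carries a $C_4$ crossing, whose left-red/right-green orientation convention (triangle rule~4) is exactly the convention built into the diagonal arrows and triangular regions of a square tile. Hence $T'$ is correct.

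For part~(b) the whole plan rests on one local lemma: in any correct triangle tiling of the plane, every triangle meets its right angle at a $C_4$ crossing and each of its two acute angles at a $C_8$ crossing. I would prove this by counting angles around a vertex. Every interior vertex is a $C_4$ or a $C_8$ crossing (rule~3), with four tile edges in the first case and eight in the second; these edges cut a small neighborhood of the vertex into that many sectors, and each sector is filled by a single triangle whose angle at the vertex equals the sector angle (two triangles cannot share a sector, since the edge separating them would be an extra tile edge at that vertex). These sector angles all lie in $\{45^\circ,90^\circ\}$ and sum to $360^\circ$, so with four sectors every one of them must be $90^\circ$ and with eight sectors every one must be $45^\circ$. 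Since a triangle has exactly one right angle, this attaches to each triangle a unique $C_4$ crossing.

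Next I would group the triangles. For each $C_4$ crossing $v$ set $S(v)$ equal to the union of the four triangles with right angle at $v$; by the local lemma the $S(v)$ partition the triangles of $T'$. Since all tiles are congruent and, at a $C_4$ crossing, opposite legs are collinear, these four triangles are the four quadrant triangles of an honest square centered at $v$, with the four hypotenuses as its sides, so the $S(v)$ tile the plane. I would then check that each $S(v)$, carrying the orientations and colors inherited from $T'$ (its four boundary arrows, its four diagonal arrows, its four triangular regions), is one of the $4^6$ square tiles; this is the finite check that cutting a square tile into four triangles is a bijection onto the admissible $C_4$-crossings. Finally I would verify that $T=\{S(v)\}$ is a correct square tiling: the corners of the $S(v)$ are exactly the acute vertices of the triangles, i.e.\ exactly the $C_8$ crossings of $T'$, and the interior of each side of an $S(v)$ is the interior of a single triangle edge and hence vertex-free, so any two of the squares that meet in more than a point meet along a full common side (square rule~1, with matching colors and orientations by triangle rule~1); square rule~2 is triangle rule~2; and square rule~3 holds because every corner of $T$ is a $C_8$ crossing, a legal square crossing. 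Cutting this $T$ back along its diagonals returns $T'$.

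For uniqueness, suppose $T_1$ is any correct square tiling whose cut is $T'$. Cutting $T_1$ produces no vertices beyond the corners and centers of its tiles, which become $C_8$ and $C_4$ crossings; hence the $C_4$ crossings of $T'$ are precisely the tile centers of $T_1$, and the tile of $T_1$ centered at $v$ can only be $S(v)$, so $T_1=T$, with labels agreeing since they are read off from the data of $T'$. I expect the main obstacle to be the local lemma above, together with the bookkeeping that the four triangles around any $C_4$ crossing always reassemble into a legitimate square tile: this is morally true by construction, but making it precise amounts to re-tracing, through the cut, the correspondence between the square rule and triangle rules~1--4. The purely geometric input --- that four congruent right isosceles triangles sharing their right-angle vertex, with legs along two perpendicular lines, form a square --- is elementary.
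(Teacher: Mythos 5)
Your proposal is correct and follows essentially the same route as the paper: check the four triangle rules directly for (a), and for (b) observe that a right angle can only sit at a $C_4$ crossing, so the triangles group uniquely into quadruples forming square tiles. Your angle-counting argument (sectors of $45^\circ$ or $90^\circ$ summing to $360^\circ$) simply makes explicit the step the paper asserts without proof.
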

\begin{proof}
(a) Conditions (1) and (2) for $T'$ follow from conditions 1 and 2 for $T$. 

Condition (3): every crossing in $T'$ either coincides with the corresponding crossing in $T$ and hence is of the form $C_8$,
or is the crossing in the center of a tile   in $T'$ and hence is of the form $C_4$.

Condition (4) is verified by hand.

(b) Since in a correct tiling of the plane by triangles there can only be a crossing $C_4$ at the vertex of any right angle, 
its tiles can be grouped into quadruples, each of which  forms a square tile from our set. Obviously, this partition is unique.

Since the local rules for tilings by squares essentially coincide with the local rules for tilings by triangles, the tiling $T$ is correct.
\end{proof}

 Thus, to prove Theorem 1 we have to prove the existence and non-periodicity of correct tilings by triangles.

\subsection{Substitution}

To prove existence and non-periodicity of correct tilings by triangles, 
consider the following substitution: each triangular tile is cut by its height into two tiles similar 
to the original one with the ratio $\sqrt2$. 
The height is oriented from the vertex of the right angle to the hypotenuse and has the same color as the original tile. 
The triangle left from the height, when viewed in the direction of the arrow, is  called  \emph{left}, it is colored red, 
and the right triangle is called \emph{right}, it is colored green (Fig.~\ref{pic5}). 
All sides (including the two halves of the hypotenuse) of the original triangle retain their color and orientation.
\begin{figure}[t]
\begin{center}
\includegraphics[scale=1]{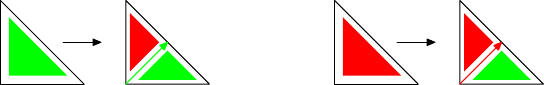}
\end{center}
\caption{Substitution. The sides drawn in black can have any color and orientation, 
which are carried over to the sides of the resulting tiles.}
\label{pic5}
\end{figure}
For example, applying the substitution to the left tile in Fig.~\ref{pic17}, we will get a tiling in Fig.~\ref{pic18}.
\begin{figure}[t]
\begin{center}
\includegraphics[scale=.75]{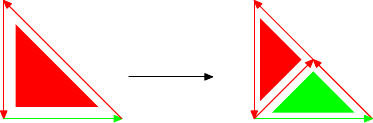}
\end{center}
\caption{An example of applying the substitution}\label{pic18}
\end{figure}

Using this substitution, we define the operations of decomposition and composition of the given tiling.
To decompose a tiling, we apply the substitution to all its tiles.
The resulting tiling tiles the same part of the plane with smaller tiles. 
This tiling is then stretched by a factor of $\sqrt 2$,
and the resulting tiling is called the \emph{decomposition} the original tiling. 
It is easy to see that the decomposition operation is injective.
Indeed, we call two tiles \emph{siblings} if they are obtained by substitution from the same tile.
Then for each tile $F$ there exists a unique sibling $G$, provided we ignore 
the color and orientation of edges. Indeed, if $F$ is a red tile, then the only its sibling is the green tile
obtained from $F$ by $90^\circ$ rotation \emph{counterclockwise}, with respect to the vertex of the right angle of $F$.
If $F$ is a green tile, then the rotation is \emph{clockwise}.

Thus, for each tiling, there is at most one tiling of which it is the decomposition. If such a tiling exists, then it is called \emph{the composition}
of the original tiling. The decomposition of the tiling $T$ will be denoted by $\sigma T$, and the composition by $\sigma^{-1} T$.

\subsection{Supertiles}
\emph{A level $n$} supertile is the $n$-fold decomposition of a single tile. 
Fig.~\ref{pic4} shows a level 6 supertile obtained by $6$-fold decomposition from the left tile in Fig.~\ref{pic18}.
\begin{figure}[h]
\begin{center}
\includegraphics[scale=.8]{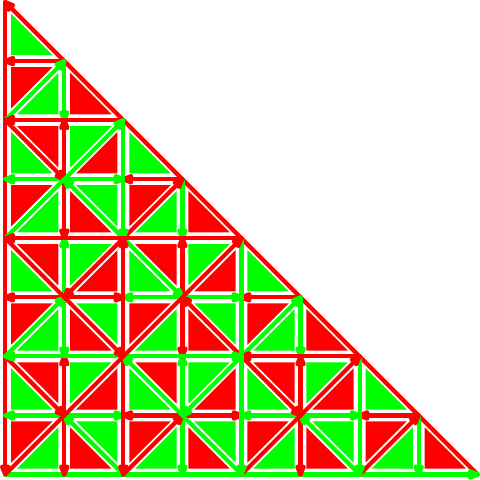}
\end{center}
\caption{Supertile $S_6$. Long arrows represent sequences of equally oriented and equally colored short arrows}\label{pic4}
\end{figure}
Supertiles of level $n$ will be denoted by $S_n$.

\begin{remark}
It can be proven by induction that only three (not 8) different side orientations of inner red tiles and
three different side orientations of inner green tiles occur in supertiles (Fig.~\ref{pic6}). 
Therefore, we can assume that the total number of triangular tiles is $(3+3)\cdot 8\cdot 4=192$ (and not 512).
\begin{figure}[t]
\begin{center}
\includegraphics[scale=1]{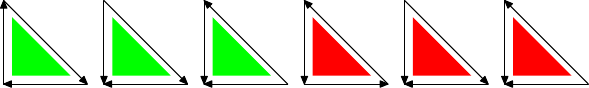}
\end{center}
\caption{Orientations of the sides of tiles occurring in supertiles.}\label{pic6}
\end{figure}
\end{remark}

\subsection{Existence of tilings satisfying local rules}

First we prove the existence of correct tilings by triangles of the entire plane. 
To do this, it suffices to prove that all supertiles satisfy the local rules. 
Why is this enough? For example, we can argue as follows: we can verify that the $S_6$ supertile in 
Fig.~\ref{pic4} contains strictly inside the tile in Fig.~\ref{pic18} on the left, 
from which this supertile was obtained by sixfold decomposition. 
Therefore, if we apply the sixfold decomposition to the supertile in Fig.~\ref{pic4},
we will get a supertile $S_{12}$ including the tiling $S_6$.
Iterating this operation, we get a tower of supertiles 
$$
S_0\subset S_6\subset S_{12}\subset S_{18}\subset\dots, 
$$
whose union is   a correct tiling of the entire plane. 
Thus, it suffices to prove the following
\begin{lemma}
Any supertile is a correct tiling.
\end{lemma}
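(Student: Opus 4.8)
The plan is to prove the lemma by induction on the level $n$ of the supertile. The base cases $n=0$ (a single tile, which vacuously satisfies the rules since it has no interior vertices) and perhaps $n=1$ or $n=2$ (checked by hand, since the substitution picture in Fig.~\ref{pic5} makes the local configurations explicit) are routine. For the inductive step, I would assume $S_n$ is a correct tiling and show $S_{n+1}=\sigma S_n$ is correct. Condition (1) is immediate: the substitution never alters the color or orientation of any side, and two tiles adjacent in $S_{n+1}$ either sit inside a common tile of $S_n$ (where matching holds by inspection of the substitution) or straddle an edge shared by two tiles of $S_n$ (where matching holds because the shared edge of $S_n$ matched and each half of it inherits that color and orientation). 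Condition (2) — adjacent triangles have different colors — likewise splits into the ``internal'' case (the height, being the new shared edge inside a tile, separates a red left triangle from a green right triangle) and the ``boundary'' case, where one must check that the red/green assignment across an old edge of $S_n$ is consistent; here the induction hypothesis on $S_n$'s coloring and the fixed red-left/green-right convention do the work.

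The heart of the proof is conditions (3) and (4): classifying the crossings of $S_{n+1}$ and showing each is of type $C_4$ or $C_8$ with the correct orientation discipline. Every vertex of $S_{n+1}$ is one of three kinds: (i) a vertex already present in $S_n$, (ii) the foot of a new height on the hypotenuse of some tile of $S_n$, or (iii) the apex of the right angle of some tile of $S_n$ together with the new height emanating from it — but this is actually the same point as a vertex of $S_n$, so really (i) and the ``new midpoint'' vertices (ii) are the two cases. For a midpoint vertex of type (ii), the crossing is assembled from the two new heights meeting tip-to-foot (if the neighboring tile across the hypotenuse also gets subdivided, which it always does) plus the pieces of the old hypotenuse and old legs; I would verify directly from Fig.~\ref{pic5} that this is exactly a $C_4$ crossing (four arrows, the height-pair as axis, perpendicular incoming arrows from the hypotenuse halves), and that the red-left/green-right rule (4) holds because the left sub-triangle is red and the right one green by definition. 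For a vertex of type (i), I would argue that the crossing in $S_{n+1}$ at an old vertex $v$ is the ``$\sigma$-image'' of the crossing at $v$ in $S_n$: the axis arrows pass through unchanged (substitution preserves them), the perpendicular incoming arrows are preserved, and the four diagonal out-arrows of a $C_8$ (or the absence thereof for a $C_4$) are reproduced by the new heights that the substitution inserts in the tiles around $v$. The cleanest way to package this is to tabulate, for each legal crossing type and each way tiles can meet at it, what the substitution produces, and observe the output is again on the legal list.

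The main obstacle I anticipate is precisely this crossing bookkeeping at old vertices: one must be certain that the new diagonal arrows created by heights of the surrounding tiles land with the right colors and directions to rebuild a legal $C_8$, and that no ``illegal'' crossing (e.g. with the wrong chirality, or with a green arrow where a red one is required) can arise. This is where the asymmetry noted in the paper's footnote — that rotating a tile by $180^\circ$ would flip a diagonal arrow's color — becomes load-bearing, and it is the step most likely to need a careful case analysis rather than a one-line argument. I would handle it by exploiting the Remark: only the three red and three green side-orientation types of Fig.~\ref{pic6} occur in supertiles, which cuts the number of local configurations to check down to a manageable finite list, and within each the substitution's behavior is rigid. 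Everything else — conditions (1), (2), and the $C_4$ analysis — is a direct reading-off from the substitution picture and should not present real difficulty.
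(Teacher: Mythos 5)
Your overall strategy (induction on the level, splitting vertices of $\sigma S_n$ into old vertices and new hypotenuse midpoints, and checking that the substitution sends legal crossings to legal crossings) is the same as the paper's. But there is a genuine gap at the very first step: the inductive step as you state it — ``assume $S_n$ is a correct tiling and show $\sigma S_n$ is correct'' — is false as stated, because correctness of a tiling of a \emph{part} of the plane constrains crossings only at \emph{internal} vertices. The paper exhibits a counterexample (Fig.~\ref{pic27}): two triangles sharing a leg form a correct partial tiling (there are no internal vertices, so conditions (3) and (4) are vacuous), yet if the two right angles sit at opposite ends of the shared leg, the two new sub-triangles adjacent to that leg after one substitution are both green, violating condition (2). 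So the naive induction hypothesis does not propagate, and the place where your argument silently relies on it is exactly the ``boundary case'' of condition (2), which you dispatch with ``the induction hypothesis on $S_n$'s coloring and the fixed red-left/green-right convention do the work.'' They do not: to know that the neighbor of a new tile $F$ across an old leg is red, you must know that the parent tile's neighbor across that leg is its mirror image, and that is forced only by forbidding a right angle and an acute angle from converging at the shared vertex $A$ — a constraint that the bare definition of correctness does not impose when $A$ lies on the boundary of the supertile.

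The fix, which is the one nontrivial idea in the paper's proof, is to strengthen the induction hypothesis: prove that every supertile is correct \emph{and} that every crossing centered at a non-corner boundary vertex is a half of a legal crossing (the two configurations of Fig.~\ref{pic26}). This strengthened statement does propagate under decomposition, and it supplies exactly the ``no right angle meets an acute angle'' constraint needed at boundary vertices in the condition-(2) argument, as well as closing the analogous loopholes in your conditions (3) and (4) for internal vertices of $\sigma S_n$ that were boundary vertices of $S_n$ (there are none of the latter, since the boundary maps to the boundary, but the half-crossing condition must still be re-verified on the new boundary to keep the induction going). The rest of your outline — the $C_4$ analysis at midpoints, the $C_4\to C_8$ and $C_8\to C_8$ bookkeeping at old internal vertices, and the use of the Remark to bound the case analysis — is sound and matches the paper.
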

\begin{proof}
This is proved by induction. The induction base is trivial, since any tile constitutes a correct tiling.

For the inductive step, we have to show that the decomposition of a correct tiling $T$ is again correct. 
But there is a problem here. This is true for tilings of the plane, but not for tilings of its parts. 
For example, a tiling consisting of two triangles with a common leg in Fig.~\ref{pic27},\begin{figure}[ht]
\begin{center}
\includegraphics[scale=.5]{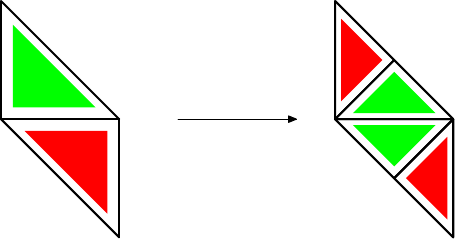}
\end{center}
\caption{The decomposition a correct tiling of a part of the plane may not be correct.}\label{pic27}
\end{figure}
is correct (the third condition is satisfied because there are no internal vertices), but its decomposition is not. 
To make the inductive step, it is necessary to impose a restriction on the crossings at the boundary points. 
To prove the lemma, it will suffice to require that the crossings with the center at any vertex of the boundary, 
except for the extreme ones, are halves of the legal crossings, that is, they have one of the two forms in Fig.~\ref{pic26}.
\begin{figure}[ht]
\begin{center}
\includegraphics[scale=1]{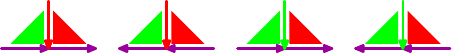}
\vskip .5cm
\includegraphics[scale=1]{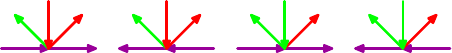}
\end{center}
\caption{Allowed crossings on the boundary of a supertile. The  purple sides must have the same color, red or green.
}\label{pic26}
\end{figure}

Now we can make the inductive step. 
Assume that $T$ is a supertile and is correct. We have to show that so is its decomposition $\sigma T$.
It is obvious that the first condition 
(tiles border side by side and colors and orientations of shared sides match) 
is inherited during decomposition.

Let us verify the second condition (adjacent triangles have different colors) for  $\sigma T$. 
Let $F$ be a tile in $\sigma T$ 
and let $G$ be its sibling; both arise from decomposing a single tile $I$ of $T$. W.l.o.g. assume that  $F$ is a right, and hence green, tile (Fig.~\ref{pic28}).
\begin{figure}[ht]
\begin{center}
\includegraphics[scale=.7]{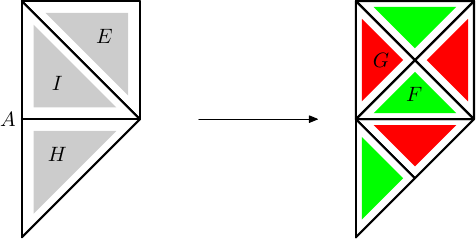}
\end{center}
\caption{All neighbors of a green tile $F$ in the decomposition are red.}\label{pic28}
\end{figure}
We need to prove that all neighbors of $F$ are red.

The tile $G$ is red by definition of the substitution.
Besides $G$, the tile $F$ can have other neighbors.
Namely, if the hypotenuse of $I$ does not lie on the edge of $T$, then the tiling $T$ contains the tile $E$ bordering $I$ along the hypotenuse.
As a result of cutting $E$, two tiles will appear, of which it is the left, and therefore red, tile that borders $F$ along the leg.

Moreover, if the leg of the tile $I$ does not lie on the boundary of the supertile, then the tiling $T$ contains the tile $H$,
which is the reflection of the tile $I$ with respect to this leg.
Indeed, otherwise the crossing centered at the vertex $A$ (Fig.~\ref{pic28}) would be illegal
(right and acute angles cannot converge at a legal crossing).
When the tile $H$ is decomposed, it is the left, and therefore red, tile that is adjacent to $F$.

Let us check the third condition for the tiling $\sigma T$ (that all crossings are legal).
To do this, we need to check that during decomposition, a legal crossing is converted into a legal one,
and the same holds for half crossings in Fig.~\ref{pic26}.
This is done by a direct verification.
In addition, we need to check that all the crossings at the new vertices are legal as well.

New vertices are the centers of  hypotenuses of tiles from $T$.
The hypotenuse itself becomes the axis of the new crossing, and the new crossing is $C_4$ by the definition of the substitution.

Finally, we check the fourth condition. The crossings of the form $C_4$ are only the crossings at the centers of the hypotenuses of the tiles in $T$,
and by the definition of substitution, all the left triangles in them are red and the right ones are green.

The third and fourth conditions for crossings centered on the boundary are verified in a similar way.
\end{proof}

\subsection{Non-periodicity of tilings satisfying local rules}

The non-periodicity of correct tilings follows from the fact that any
tiling of the plane has a structure in the following sense:
for any $n$ it can be partitioned uniquely into level $n$ supertiles.
This is a simple consequence of the following 

\begin{theorem}\label{th2}
  Any correct tiling is composable and its composition is correct.
\end{theorem}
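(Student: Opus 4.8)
The plan is to recover the composition of $T$ explicitly: using the $C_4$ crossings we pair up the tiles of $T$, glue each pair into a larger triangle, let $T'$ be the resulting tiling, and then check that $\sigma T'=T$ and that $T'$ is correct.

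I would start from the remark that a right angle of a tile can never occur at a $C_8$ crossing, since the eight tiles around such a crossing all meet it with $45^\circ$ angles; hence the right-angle vertex of every tile of $T$ is a $C_4$ crossing, and conversely exactly four tiles meet at every $C_4$ crossing, each contributing its right angle. Fix a $C_4$ crossing $M$: its four tiles meet along four edges issuing from $M$ and, reading off Fig.~\ref{pic0}, two opposite edges form the axis (a monochromatic straight through-line) while the other two point into $M$. The axis is thus uniquely determined, and by the substitution rule (Fig.~\ref{pic5}) it has to be the common hypotenuse of the two triangles we want to assemble; consequently the four tiles split in a unique admissible way into two pairs, each pair sharing one of the two inward edges. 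Gluing such a pair across its shared edge yields an isosceles right triangle whose hypotenuse is the axis, whose height is the inward edge, and whose color is the color of that inward edge; condition~(4) (red on the left, green on the right) guarantees that in each pair the red tile is the \emph{left} one, so that applying the substitution to the glued triangle reproduces exactly the original pair, with all edge colors and orientations carried over. This is the unique composition step at $M$.

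Since every tile of $T$ has a single right-angle vertex, it lies at exactly one $C_4$ crossing and so belongs to exactly one of these pairs; the pairing is therefore globally consistent, and the glued triangles form a tiling $T'$ of the whole plane with pairwise disjoint interiors, with $\sigma T'=T$ by construction. Hence $T$ is composable and $\sigma^{-1}T=T'$, and it remains to verify that $T'$ satisfies the four local rules. Rule~(1) is inherited from $T$, since each edge of $T'$ is a union of edges of $T$ with matching colors and orientations. For rules~(3)--(4) the guiding observation is that the vertices of $T'$ are exactly the $C_8$ vertices of $T$: each $C_4$ vertex becomes an interior point of a new hypotenuse and disappears, while a $C_8$ vertex $v$ survives, and the crossing of $T'$ at $v$ is obtained from the one of $T$ by deleting precisely the arms that are heights of parents having their right angle at $v$ --- such arms are out-arrows, hence among the four diagonal arrows of the $C_8$ crossing at $v$. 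One then checks, against Fig.~\ref{pic0}, that the number of deleted arms is always $0$ or $4$ and never $1$, $2$, or $3$, and that the remaining arrows form precisely a legal $C_4$ crossing (when four are deleted) or again a $C_8$ crossing (when none are). Rule~(2), that two adjacent tiles of $T'$ have different colors, splits into the case where the shared edge is a common hypotenuse (the two colors are then those of the two inward edges at a common $C_4$ crossing) and the case where it is a common leg; in both cases sameness of colors is incompatible with the color data of a legal crossing, but this is one of the points that has to be pinned down carefully.

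The step I expect to be the real obstacle is exactly this crossing bookkeeping. In spirit it is the converse of what is already done in the proof that every supertile is correct, where decomposition is shown to send legal crossings to legal crossings; here one must show that \emph{every} legal crossing of $T$ arises from such a decomposition and coarsens back to a legal crossing of $T'$. The two genuinely delicate points are (i) the uniqueness of the admissible pairing at each $C_4$ crossing, which is what makes $\sigma^{-1}T$ well defined, and (ii) the ``$0$ or $4$'' dichotomy at $C_8$ crossings together with the color constraints needed for rule~(2). Both are finite verifications against the explicit data in Figures~\ref{pic0} and~\ref{pic5}, but they must be carried out with care, using in particular that the height of a tile points away from its right-angle vertex and carries that tile's color.
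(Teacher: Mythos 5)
Your proposal follows essentially the same route as the paper: pair the four tiles at each $C_4$ crossing into siblings across the two edges perpendicular to the axis, glue each pair into a parent whose hypotenuse is the axis and whose color is that of the erased leg, and then verify the rules by tracking how the $C_8$ crossings transform --- the paper's four-case analysis of $C_8$ fillings (Fig.~\ref{pic16}) is exactly your ``$0$ or $4$ deleted arms'' dichotomy, and its verification of rule~(2) is precisely your two-case color argument (perpendicular arrows at a $C_4$ for shared hypotenuses, orthogonal arrows at a $C_8$ for shared legs). The one detail you did not anticipate is that the paper eliminates one of the four a priori possible $C_8$ fillings not by inspecting that crossing alone but by showing that the crossings at two neighboring vertices would then be illegal, so the ``finite verification'' is not purely local to a single vertex of Fig.~\ref{pic0}.
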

\begin{proof}
  Consider any tile of a given correct tiling  $T$ and the vertex of its right angle.
  The crossing at this vertex is $C_4$.
  At this crossing, each tile has a sibling.
  Let us combine them into one, erasing the common leg, and leave all the colors and orientations as they were.
  The hypotenuse of the resulting triangle is formed by connecting two arrows on the axis of the crossing.
  Since they are equally colored and oriented, the hypotenuse will be correctly oriented and correctly colored.
  The resulting tiling is the composition of the original one.
  Let us prove that it is correct.
  
  We are given that $T$ is correct and need to prove that $\sigma^{-1} T$ is correct.
  First we verify condition (1), that is, prove that $\sigma^{-1} T$ is side-to-side and shared sides have matching colors and orientations.
  This is obvious for the legs of the tiles from $\sigma^{-1} T$, since they are the hypotenuses of the tiles from $T$,
  and by the condition (1) for $T$ the hypotenuses are adjacent to the hypotenuses of the same color and orientation.

  Now let us prove the same for the hypotenuses of tiles from $\sigma^{-1} T$.
  Consider a tile $F$ from $\sigma^{-1} T$ (Fig.~\ref{pic13}(a)). W.l.o.g. assume that it is red.
  We need to show that the tile $G$ shown in Fig.~\ref{pic13}(a) belongs to $\sigma^{-1} T$. To this end,
  consider the midpoint of the hypotenuse, denoted by the letter $A$. The crossing centered on $A$ in the tiling $T$ can only be $C_4$,
  so there are two tiles in $T$ under the hypotenuse $A$,
  as shown in Fig.~\ref{pic13}(b). These two tiles, when composed, give the sought  tile $G$.
\begin{figure}[ht]
\begin{center}
\includegraphics[scale=.75]{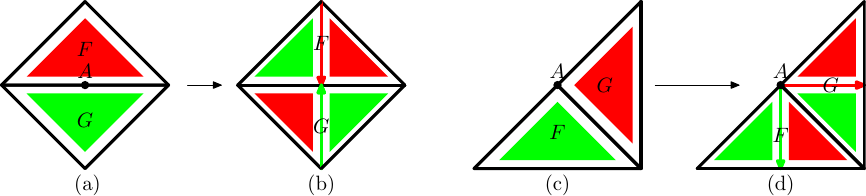}
\end{center}
\caption{Two types of neighborhoods of tiles in the composed tiling. }\label{pic13}
\end{figure} 

As a bi-product of this argument, we see that the colors of the tiles $F$ and $G$ are different because the colors of the vertical arrows at the crossing $C_4$ are different.
Therefore, condition (2) is satisfied for triangles $\sigma^{-1} T$ with a common hypotenuse. Let us verify condition (2) for tiles
that share a leg. Let the tiles $F$ and $G$ have a common leg (Fig.~\ref{pic13}(c)).
Then their colors are equal to the colors of two arrows in the tiling $T$ (Fig.~\ref{pic13}(d)),
connecting the point $A$, the common vertex of right angles of $F$ and $G$, with the centers of their hypotenuses.
These two arrows form a right angle and at their common origin (point $A$) there can only be the crossing $C_8$ in the tiling $T$.
By a routine check, we can see that any two orthogonal arrows with a common origin at the center of the crossing $C_8$ have  different colors.

Let us verify condition (3). In the course of composition, the crossings $C_4$ disappear.
We need to show that crossings of the form $C_8$ are transformed into themselves or into $C_4$.
In all crossings $C_8$ the sides of the tiles adjacent to the center of the crossing alternate --- hypotenuse, leg, hypotenuse, and so on.
And the colors of the tiles also alternate. Therefore, 4 options arise, depending on whether the alternation of colors begins with red or green,
and the alternation of sides --- from the leg or hypotenuse.
These four options are shown in Fig.~\ref{pic16}.
 \begin{figure}[ht]
\begin{center}
\includegraphics[scale=.75]{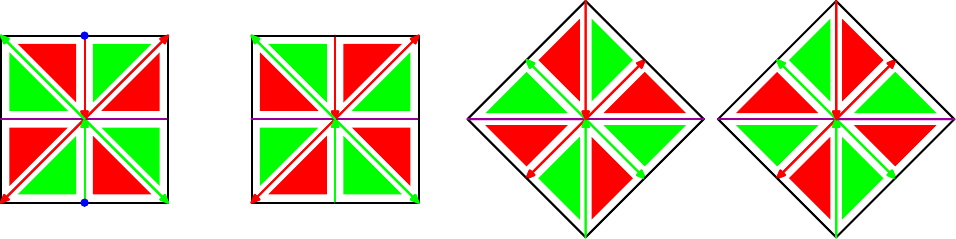}
\end{center}
\caption{Four types of filling $C_8$ crossing. }\label{pic16}
\end{figure}

 Let us first prove that the first option is impossible. Indeed, the crossings at the lower and upper blue points cannot be legal.
 Indeed, the axes of these crossings must be directed vertically, since otherwise an arrow would go from 
 its center in the direction orthogonal to the axis.
 For the upper crossing, the axis should be directed downwards, and for the lower one, upwards.
 But then the colors of both triangles with a vertex at the blue points should be opposite to the existing colors.

 In the next two crossings, the siblings to all tiles are not adjacent to the crossing. For this reason, when composed, the crossings do not change.
 Finally, in the fourth case, the sibling for each tile is also adjacent to the center. When decomposed, the eight tiles produce four tiles,
 and we get the crossing $C_4$. In this case,
 condition (4) is satisfied for the resulting crossing $C_4$. Indeed, in any $C_8$ crossing,
the red arrow goes to the left of the arrow entering its center, and the green arrow goes to the right.
Hence a red triangle is to the left of the arrow entering the center of the obtained $C_4$ crossing,
and a green one to the right.
\end{proof}

\begin{theorem}\label{th3}
Any correct tiling of the plane by triangles is non-periodic. Consequently, any correct tiling by square tiles is also non-periodic.
\end{theorem}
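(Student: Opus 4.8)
The plan is to derive Theorem~\ref{th3} from the unique composition property of Theorem~\ref{th2} by the classical self-similarity argument, and then to carry non-periodicity over from triangular tilings to square tilings using the Lemma of Section~\ref{s3}. So the main work is the triangular case.

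Suppose, towards a contradiction, that $T$ is a correct triangular tiling of the whole plane that is invariant under translation by some nonzero vector $v$. Decomposition is the composite of the substitution --- which acts tile by tile and hence commutes with translations --- followed by a global dilation with factor $\sqrt2$; therefore it sends the translation by $w$ to the translation by $\sqrt2\,w$, and consequently composition sends the translation by $w$ to the translation by $w/\sqrt2$. By Theorem~\ref{th2} applied repeatedly, $\sigma^{-n}T$ is a well-defined correct tiling of the whole plane for every $n\ge0$, and by the previous remark it is invariant under translation by $v_n:=v/(\sqrt2)^{\,n}$. Since all tiles of a correct triangular tiling are congruent copies of the single reference right isosceles triangle, they all have one and the same inradius $\rho>0$; choose $n$ so large that $|v_n|<\rho$. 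Pick any tile $F$ of $\sigma^{-n}T$ and let $x$ be its incenter. The open disk of radius $\rho$ about $x$ lies in the interior of $F$, so $x+v_n$ lies in the interior of $F$ as well; but $x+v_n$ is the incenter of the tile $F+v_n$ of $\sigma^{-n}T$ and hence lies in its interior too. As distinct tiles of a tiling have disjoint interiors, $F=F+v_n$, which forces $v_n=0$ and hence $v=0$, a contradiction. Thus no correct triangular tiling of the plane is periodic.

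For the square case, note that by part~(a) of the Lemma, cutting a correct square tiling into triangles yields a correct triangular tiling, and this cutting operation commutes with translations. Hence a correct square tiling invariant under a nonzero translation $v$ would give a correct triangular tiling invariant under $v$, which is impossible by the previous paragraph. Therefore every correct square tiling is non-periodic, which proves Theorem~\ref{th3}; together with the existence of correct triangular tilings (the tower of supertiles) and part~(b) of the Lemma, which recovers a correct square tiling from any correct triangular one, this also completes the proof of Theorem~\ref{th1}.

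The only point that I expect to require care is the scaling bookkeeping in the second paragraph: composition keeps the size of the tiles fixed while dividing the length of any period by $\sqrt2$, so iterating composition drives the period to $0$ while the tiles remain of a fixed positive size --- and it is exactly this tension that yields the contradiction. Everything else (translation-equivariance of the substitution, the fact that the inscribed disk sits in the interior of a tile, and the fact that cutting a square tiling into triangles commutes with translations) is routine verification.
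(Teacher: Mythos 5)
Your proof is correct and follows essentially the same route as the paper: composition (justified by Theorem~\ref{th2}) divides any period by $\sqrt2$ while keeping the tile size fixed, and iterating yields a nonzero period smaller than a tile, which is absurd. Your incenter argument just makes rigorous the step the paper dismisses with ``which is impossible,'' and the transfer to square tilings via the Lemma is exactly what the paper intends.
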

\begin{proof}
  Let $T$ be a correct tiling of the plane by triangles and $\sigma^{-1}T$ be its composition.
Assume that $a\ne 0$ is its period, that is $T=T+a$.
Then the vector $a/\sqrt2$ is the period of $\sigma^{-1}T$. Indeed,
$$
\sigma^{-1}T=
\sigma^{-1}(T+a)= (\sigma^{-1}T)+a/\sqrt2.
$$
It is proved similarly that $a/2$ is the period of the double composition of $T$.
Repeating this argument many times, we get a tiling whose period is much less than the size of the tiles, which is impossible.
\end{proof}

\subsection{Substitution tilings}

\begin{definition}
  A tiling is called a \emph{substitution tiling} (for the given substitution) if every its finite fragment
  occurs in a supertile.
\end{definition}

Since supertiles are correct tilings, any substitution tiling is correct. It turns out that the opposite is also true.

\begin{theorem}\label{th4}
Any correct tiling of the plane by triangles is a substitution tiling.
\end{theorem}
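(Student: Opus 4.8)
The plan is to run the standard self-similarity argument for substitution tilings. Let $T$ be a correct tiling of the plane by triangles and let $P$ be a finite fragment of $T$; we must place $P$ inside some supertile. Enlarging $P$ only makes the task harder, so we first replace $P$ by the patch $P'$ consisting of all tiles of $T$ that meet some fixed closed disk containing $P$; then $P'$ is connected and $P\subseteq P'$. By Theorem~\ref{th2} applied repeatedly, every composition $\sigma^{-n}T$ is a correct tiling, and since composition and decomposition are inverse operations we have $\sigma^{n}(\sigma^{-n}T)=T$. Consequently $T$ is partitioned into level $n$ supertiles, one for each tile $t$ of $\sigma^{-n}T$ (namely $\sigma^{n}(\{t\})$), and the combinatorial structure of this partition — its cells and which cells share a vertex — coincides with that of the tiling $\sigma^{-n}T$, only blown up by the factor $(\sqrt2)^{n}$. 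In particular the cells have diameter tending to infinity, so for $n$ large enough the connected patch $P'$ lies in the union of those level $n$ supertiles that meet one common vertex of the partition. Equivalently,
$$
P'\subseteq\sigma^{n}(Q'),
$$
where $Q'$ is the star of the corresponding vertex $v'$ of the correct tiling $\sigma^{-n}T$, i.e.\ the set of triangular tiles of $\sigma^{-n}T$ incident to $v'$. Since the crossing at $v'$ is $C_{4}$ or $C_{8}$, the star $Q'$ consists of at most eight tiles and is one of only finitely many configurations.

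The remaining — and crucial — point is the claim that \emph{every vertex star that occurs in a correct tiling already occurs inside some supertile}. Granting this, choose $m$ with $Q'$ a (congruent copy of a) sub-fragment of $S_{m}$. Decomposition acts locally on tiles and sends a sub-fragment of a tiling to the matching sub-fragment of its decomposition, so $\sigma^{n}(Q')$ is a sub-fragment of $\sigma^{n}(S_{m})=\sigma^{n}(\sigma^{m}(\{t_{0}\}))=S_{m+n}$. As $P\subseteq P'\subseteq\sigma^{n}(Q')$, the fragment $P$ occurs inside the supertile $S_{m+n}$, which is what we wanted.

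To prove the claim I would argue by direct inspection, exactly in the style of the earlier proofs. By conditions (2)--(4) and the facts already established (around any crossing the colours of the tiles alternate; a $C_{8}$ crossing is filled in one of the four ways of Fig.~\ref{pic16}, the first being impossible; the colour rule at $C_{4}$; and, by the Remark, only three orientations of inner red tiles and three of inner green tiles ever occur), the vertex stars admissible in a correct tiling form a finite list up to rotation by a multiple of $45^{\circ}$. For each entry of the list one checks that it occurs strictly inside a supertile of small level; it suffices to examine $S_{1},\dots,S_{k}$ for a small fixed $k$ — for instance the supertiles in Figs.~\ref{pic18} and~\ref{pic4} — and to take $m$ equal to the largest level needed.

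The main obstacle is precisely this last verification: compiling the complete list of vertex stars that a correct tiling can contain (it is the Remark that keeps the list finite) and locating each of them, with all colours and orientations matched, inside a concrete small supertile. Everything else is routine once Theorem~\ref{th2} is in hand: the reduction to vertex stars is just the observation that supertiles grow without bound, and the passage from level $m$ to level $m+n$ is immediate from $\sigma^{n}\circ\sigma^{m}=\sigma^{m+n}$ together with the locality of decomposition.
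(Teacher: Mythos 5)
Your overall architecture is the same as the paper's: compose $n$ times using Theorem~\ref{th2}, observe that the level-$n$ supertiles partitioning $T$ grow without bound, so that an enlarged patch $P'$ lies inside the $n$-fold decomposition of a single vertex star (the paper calls these \emph{crowns}) of the correct tiling $\sigma^{-n}T$, and reduce everything to showing that vertex stars occur in supertiles. The gap is in that last step, which you yourself flag as ``the main obstacle'': you require each admissible vertex star to be located in a supertile \emph{with all colours and orientations matched}, including the decorations of the outer sides of the star. That requirement is both too strong and not settled by the local rules: the outer sides of a star belong to crossings centred at the non-central vertices, whose legality depends on tiles outside the star, so conditions (2)--(4) at the central vertex do not determine which fully decorated stars are realizable in a correct tiling. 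The list you would compile from the local rules alone contains decorated stars that occur in no supertile, so your finite check would fail on them; and invoking the Remark to prune the list is circular, since the Remark is proved (by induction) only for supertiles --- extending it to arbitrary correct tilings is essentially the content of Theorem~\ref{th4} itself.

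The paper's fix is exactly the piece you are missing, and you already have the ingredient needed to use it: since $P'$ consists of all tiles meeting a disk containing $P$, every side of $P$ is an interior side of $P'$, hence interior to $\sigma^{n}(Q')$. It therefore suffices to embed the crown $Q'$ into a supertile \emph{ignoring the colours and orientations of its outer sides}; any mismatch is then confined to the boundary of $\sigma^{n}(Q')$ and never touches a side of $P$. With outer decorations ignored there are only the four combinatorial crown types of Fig.~\ref{pic31}, and the paper verifies the embedding by checking type (a) by hand inside $S_5$ (Fig.~\ref{pic34}) and observing that the substitution carries the types cyclically, (a)$\to$(b)$\to$(c)$\to$(d)$\to$(c), so every crown occurs in some $S_m$ with $m\le 8$. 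This also supplies the justification you omit for the assertion that ``a small fixed $k$ suffices.'' So: keep your reduction, weaken the claim about vertex stars to the decoration-free version, use the interiority of $P$ in $P'$ to absorb the boundary mismatches, and replace the brute-force enumeration by the cyclic action of the substitution on crown types.
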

\begin{proof}
Consider any fragment  $P$
of a correct tiling $T$ of the  plane. We have to show that $P$ occurs in a supertile. To this end add in $P$ a finite number of tiles 
from  $T$ so that $P$ becomes an inner part of the resulting fragment $Q$ of $T$.

By Theorem~\ref{th2} for any $k$ we can compose the given tiling  $k$ times
and the resulting tiling $\sigma^{-k}T$ is correct.  
Call a \emph{crown in $\sigma^{-k}T$ centered at a vertex $A$} of a tile in $\sigma^{-k}T$
the set of all tiles from $\sigma^{-k}T$ that include $A$.  Since  $\sigma^{-k}T$
is a correct tiling, all its crowns have a form shown on Fig.~\ref{pic31} (see the proof of Th.~\ref{th2}).
 \begin{figure}[ht]
\begin{center}
\includegraphics[scale=1]{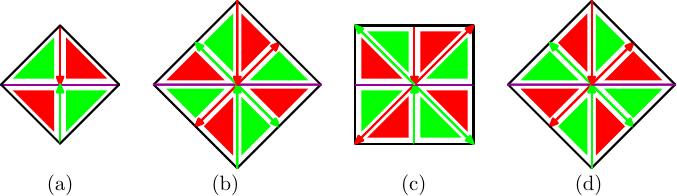}
\end{center}
\caption{Crowns in a correct tiling. Black and and purple sides can have any colors and orientations. The purple 
sides must have matching colors and orientations. }\label{pic31}
\end{figure}

Consider the sets of the form $\sigma^k C$ where $C$ is a crown within the tiling
$\sigma^{-k}T$. 
As $k$ increases, these sets increase as well. If $k$ is large enough, then the set $Q$ 
is covered by a  single such set, say by $\sigma^k C$,
that is,  $Q\subset \sigma^k C$. As $\sigma^{-k}T$  
is a correct tiling, it can have only crowns shown on   Fig.~\ref{pic31}.

Let us show that all crowns from 
 Fig.~\ref{pic31} appear in  supertiles provided we ignore colors and orientations of its outer 
 sides (shown in black color on Fig.~\ref{pic31}).  For crowns of the form (a) 
it can be verified  by hand: all the four such crowns occur within  supertiles $S_5$ shown on Fig.~\ref{pic34}.
 \begin{figure}[ht]
\begin{center}
\includegraphics[scale=.75]{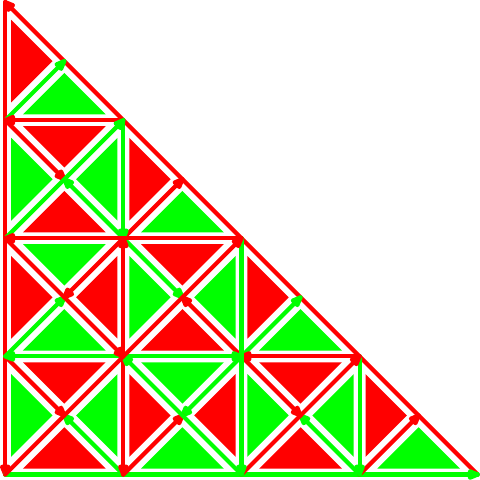}\quad \includegraphics[scale=.75]{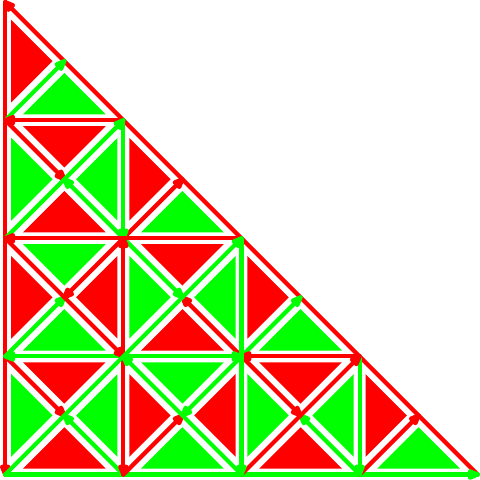}
\end{center}
\caption{On diagonals of supertiles $S_5$, there are all the four crowns of the form (a). }\label{pic34}
\end{figure}
Again it is easy to verify  by hand that the substitution transforms the crowns as follows:
(a)$\to$(b)$\to$(c)$\to$(d)$\to$(c). Hence all  the four crowns (b) occur within supertiles $S_6$,
the crowns (c)  within  supertiles $S_7$,
and the crowns (d)  within supertile $S_8$.

It follows that the crown $C$, whose $k$-fold decomposition covers $Q$, 
appears in a supertile, say in $S_n$. Therefore
the tiling $\sigma^{k}C$ appears in the supertile $S_{n+k}$. Hence the patch $Q$  
appears in that supertile provided we ignore colors and orientations of its
outer sides. Since $P\subset Q$ and no side of the  patch $P$ is an outer side of $Q$,
we are done.
\end{proof}

Combining Theorem~\ref{th4} with the remark preceding it (supertiles are
correct tilings), we obtain an exact description of the family of tilings
defined by our local rules.

\begin{corollary}\label{cor-perfect}
A tiling of the plane by triangles is correct if and only if it is a
substitution tiling; that is, the local rules of Section~\ref{s3} admit exactly
those tilings whose every finite fragment occurs in a supertile.
\end{corollary}
\begin{proof}
If a tiling is correct, then it is a substitution tiling by Theorem~\ref{th4}.
Conversely, correctness is a local property: each of the conditions (1)--(4)
constrains only the tiles meeting at a single vertex. If every finite fragment
of a tiling occurs in a supertile, then in particular the fragment formed by
the tiles around any given vertex occurs in a supertile; enlarging it if
necessary, we may assume that it occurs in the interior of a supertile, where
all four conditions hold because supertiles are correct tilings. Hence the
tiling is correct.
\end{proof}

\begin{remark}\label{rem-perfect}
Corollary~\ref{cor-perfect} says that our local rules are \emph{perfect}: the
family they define is precisely the family of substitution tilings, with no
extra tilings admitted. This is stronger than merely forcing the substitution
\emph{hierarchy}. Every substitution tiling is hierarchical (admits an infinite
chain of compositions), but the converse may fail~\cite{gsrev,gs26}; and matching
rules that force only the hierarchy typically admit additional non-substitution
tilings, for instance non-repetitive tilings with fault lines~\cite{frank}. In
the language of~\cite{gsrev}, Corollary~\ref{cor-perfect} exhibits the
$\sqrt2$-triangle substitution as a sofic family, witnessed by an explicit and
simple set of perfect matching rules.
\end{remark}

\section{Relation to the chair tiling}\label{s4}

In this section we explain how our tilings are related to the classical
\emph{chair tiling} and to other local rules that enforce it. The chair tiling
is the substitution tiling generated by the L-tromino (a $2\times2$ square with
one of its four unit squares removed) taken in four orientations; the chair
substitution inflates by a factor of~$2$ and replaces each chair by four chairs.
The chair substitution admits no local matching rules on its own tiles: no
finite set of rules on the undecorated L-trominoes has the chair tilings as
exactly its tilings, so the chair can be pinned down only through a decoration
--- as our construction below, and the Trilobite and Crab, do (see~\cite{gschair}
for background, \cite{bg} for the general theory, and the Tilings
Encyclopedia~\cite{te} for a catalogue of the chair and related substitutions).

The chair tiling is mutually locally derivable to the \emph{arrowed-square
tiling} (the \emph{square chair} of~\cite{te}): the plane is tiled by a regular
grid of unit squares, and each square carries a diagonal arrow pointing to one
of its four corners. In all figures of this section we draw this arrow as a
small square of a different color, placed in the corner the arrow points to. The
four arrow directions correspond to the four orientations of the chair, and the
chair substitution induces the \emph{arrowed-square substitution} on the arrows
(Figs.~\ref{pic-chairsub}--\ref{pic-arrowsub}). We freely identify a chair tiling
with its arrowed-square form; concretely, in the theorem below a \emph{chair
tiling} is an arrowed-square tiling each of whose finite fragments occurs in an
arrowed-square supertile, which by the bijection of Remark~\ref{rem-mld} is the
same as an L-tromino chair tiling rendered on arrowed squares.

\begin{figure}[ht]
\begin{center}
\includegraphics[scale=1.2]{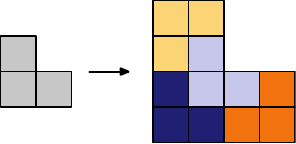}
\end{center}
\caption{The chair substitution: an L-tromino is inflated by~$2$ and cut into
four chairs (shown in four colors).}
\label{pic-chairsub}
\end{figure}

\begin{figure}[ht]
\begin{center}
\includegraphics[scale=1.1]{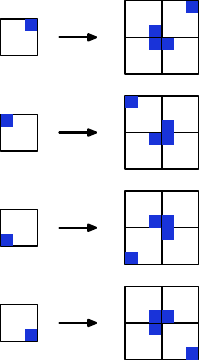}
\end{center}
\caption{The arrowed-square substitution, mutually locally
derivable to the chair. Each unit square carries a marker --- the small square
in the corner its diagonal arrow points to --- in one of its four corners. Under
the substitution the child lying in the direction of the parent's marker keeps
that marker (pointing outward), while the other three point to the center of the
block.}
\label{pic-arrowsub}
\end{figure}

\begin{remark}\label{rem-mld}
The chair substitution and the arrowed-square substitution of
Figs.~\ref{pic-chairsub}--\ref{pic-arrowsub} are mutually locally derivable: on
their substitution tilings there is a local bijection taking one to the other.
In one direction, mark each unit square of a chair tiling by the corner at the
reflex (inner) vertex of the chair it belongs to; in the other, the three marked
squares whose arrows meet at a common vertex, together with the empty fourth
square of that block, recover one chair. Both maps are local, so the two
substitutions define the same tiling space up to this bijection.
\end{remark}

\paragraph{A factor map onto the chair.}
Recall that every square tile in our tilings is a unit square whose center is a
$C_4$ crossing. The $C_4$ crossing has a distinguished oriented diagonal, its
\emph{axis} --- the pair of equally oriented, equally colored arrows passing
through the center (the purple arrow in Fig.~\ref{pic0}). As an oriented arrow
through the center, the axis could a priori point in any of the \emph{eight}
directions that are multiples of $45^\circ$ --- the directions taken by the
arrows of our tilings. In any correct tiling, however, only \emph{four} of them
occur: the axis of a $C_4$ crossing runs along a diagonal of its
(axis-aligned) square and hence points to one of the four corners, that is, in
one of the directions $45^\circ+90^\circ k$. We use this restriction
implicitly. Define the map
$$
\varphi : (\text{square tile}) \longmapsto (\text{unit square with the diagonal
arrow equal to its } C_4 \text{ axis}),
$$
that is, $\varphi$ forgets all colors of arrows and triangles, forgets the
orientations of the horizontal and vertical edges, and keeps only the oriented
diagonal of each tile.

\begin{remark}\label{rem-eight}
The four available axis directions are not absolute; they depend on the tiling.
Our substitution carries a $45^\circ$ rotation (it is multiplication by $1+i$;
see Remark~\ref{rem-commute}), so applying it once produces a correct
tiling whose $C_4$ axes point in the \emph{complementary} four directions, the
multiples of $90^\circ$ (toward the edge midpoints of the squares); applying it
a second time restores the original four. Thus along the substitution hierarchy
all eight directions occur, four at each parity of the level --- the tile-level
trace of the $45^\circ$ rotation our substitution carries.
\end{remark}

\begin{definition}\label{def-infsup}
An \emph{infinite supertile} is a tiling of a region of the plane equal to a
union $\bigcup_{n\ge 0} S_n$ of a chain $S_0\subset S_1\subset S_2\subset\cdots$
in which each $S_n$ is a supertile of level $n$.
\end{definition}

For the arrowed-square substitution a level-$n$ supertile is a $2^n\times2^n$
square block, so an infinite supertile tiles the whole plane, a half-plane, or
a quadrant (a strip is impossible, as two opposite sides of a square recede
under inflation). Every chair tiling of the plane is, accordingly, a union of
infinite supertiles meeting along their boundaries, and the only possibilities
are: a single infinite supertile filling the plane, two half-planes meeting
along a bi-infinite line, or four quadrants meeting at a point. No other
configuration --- in particular none made of three regions, such as a half-plane
and two quadrants --- can occur. Indeed, the level-$n$ supertiles of a chair
tiling form a coset of the grid $2^n\mathbb Z^2$: the arrowed-square substitution
scales positions by $2$, so every level-$n$ supertile is a $2^n\times2^n$ block
aligned to $2^n\mathbb Z^2$. A straight line is therefore a supertile boundary at
\emph{every} level only if its coordinate lies in $\bigcap_{n}(2^n\mathbb
Z+c_n)$, a nested intersection of arithmetic progressions whose step $2^n$ tends
to infinity; such an intersection contains at most one value. Hence at most one
horizontal and at most one vertical line is a boundary at every level, which
leaves exactly the three possibilities above. The same holds, through
$\varphi$, for the supertiles of our own substitution, which $\varphi$ carries
to arrowed-square supertiles.

\begin{remark}\label{rem-commute}
The map $\varphi$ intertwines the two substitutions: two steps of our
substitution correspond to one step of the chair substitution,
$$
\varphi(\sigma^2 T)=\bar\sigma\,\varphi(T),
$$
where $\bar\sigma$ is the chair substitution composed with a quarter-turn (a
symmetry of the chair, so $\bar\sigma$ has the same supertiles as the
arrowed-square substitution of Fig.~\ref{pic-arrowsub}). To see this, recall
that the two-fold decomposition $\sigma^2\Box$ of a square tile $\Box$ is a
$2\times2$ block of four square tiles; inspecting the substitution
(Fig.~\ref{pic5}) shows that the $C_4$ axes of these four are precisely the
arrows that $\bar\sigma$ assigns to the axis of $\Box$ (the rule of
Fig.~\ref{pic-arrowsub}; the axis directions are rotated, as in
Remark~\ref{rem-eight}). In particular they depend only on the axis of $\Box$,
not on the colors and edge orientations that $\varphi$ discards. This is the
identity $\varphi(\sigma^2\Box)=\bar\sigma(\varphi(\Box))$ for a single tile;
since $\sigma^2$, $\varphi$ and $\bar\sigma$ all act tile by tile and commute
with unions of patches, it holds for \emph{every} tiling $T$. This semiconjugacy
makes $\varphi$ a factor map, and the proof of Theorem~\ref{th-chair} below
rests on it.

This makes precise the sense in which our substitution is a \emph{square root}
of the chair substitution. Identify the plane with the complex line so that the
square grid becomes the Gaussian integers $\mathbb Z[i]$; the chair substitution
is then multiplication by~$2$ (inflation by~$2$, no rotation). Our elementary
substitution $\sigma$ inflates by~$\sqrt2$, so it is multiplication by a Gaussian
integer of norm~$2$; up to the rotational symmetry of the tiling space these are
the associates of $1+i$, and with the left${}={}$red, right${}={}$green
convention of Fig.~\ref{pic5} it is exactly $1+i$ --- the dilation by~$\sqrt2$
composed with a counterclockwise $45^\circ$ rotation (read off by tracking the
$C_4$ axis of one tile through a single step). Hence $\sigma^2$ is multiplication
by $(1+i)^2=2i$, the chair inflation by~$2$ composed with a counterclockwise
quarter-turn, which is the $\bar\sigma$ above. In this exact sense $\sigma$ is a
square root of the chair substitution composed with a $90^\circ$ rotation,
rather than of the chair substitution itself: $2$ has no square root in $\mathbb
Z[i]$ that is an inflation without rotation, so the $45^\circ$ rotation is
unavoidable and its square is $\pm2i$, never $+2$. (Interchanging red and green,
or reflecting the plane, conjugates $1+i$ to $1-i$ and reverses the sense of the
turn.) It is this $45^\circ$ rotation that promotes the statistical rotational
symmetry of the tiling space from the $4$-fold symmetry of the chair to the
$8$-fold symmetry visible in the $C_8$ crossings.
\end{remark}

\begin{theorem}\label{th-chair}
The map $\varphi$ sends every correct tiling by square tiles to a chair tiling,
and it is \emph{onto}: every chair tiling is the image of some correct tiling.
Moreover, $\varphi$ is one-to-one over every chair tiling that is a single
infinite supertile (filling the plane). A chair tiling can have more than one
$\varphi$-preimage only when it is a union of two or more infinite supertiles
(two half-planes, or four quadrants), and then its preimages differ only in the
colors and orientations of the edges that lie on the boundaries between these
infinite supertiles.
\end{theorem}

\begin{proof}
By Corollary~\ref{cor-perfect} the correct tilings are exactly the substitution
tilings. The map $\varphi$ is local: the arrowed square it places at a position
depends only on the square tile of $T$ there. Hence $\varphi$ commutes with
translations and sends every patch of $T$ to the patch of $\varphi(T)$ over the
same region.

Call the $2n$-fold decomposition $\sigma^{2n}\Box$ of a single square tile
$\Box$ a \emph{square supertile} of level $n$; it is a $2^n\times2^n$ block of
square tiles, and every finite fragment of a correct tiling lies in one
(Corollary~\ref{cor-perfect}). Applying the identity of Remark~\ref{rem-commute}
$n$ times gives
$$
\varphi\big(\sigma^{2n}\Box\big)=\bar\sigma^{\,n}\big(\varphi(\Box)\big).
$$
Thus $\varphi$ carries level-$n$ square supertiles onto level-$n$ arrowed-square
supertiles, and --- as $\varphi(\Box)$ runs over the four arrows --- onto all of
them.

\smallskip
\noindent\emph{(1) The image of a substitution tiling is a substitution tiling.}
Let $P$ be a finite fragment of $\varphi(T)$. Then $P=\varphi(P')$ for the
fragment $P'$ of $T$ over the same region. By the above, $P'$ lies in a square
supertile $\sigma^{2n}\Box$, so $P=\varphi(P')$ lies in
$\varphi(\sigma^{2n}\Box)=\bar\sigma^{\,n}\varphi(\Box)$, an arrowed-square
supertile. Every finite fragment of $\varphi(T)$ thus occurs in an arrowed-square
supertile, that is, $\varphi(T)$ is a chair tiling.

\smallskip
\noindent\emph{(2) Every chair tiling has a preimage.}
Let $C$ be a chair tiling, and exhaust the plane by fragments $Q_1\subset
Q_2\subset\cdots$ of $C$. Each $Q_k$ lies in an arrowed-square supertile
$\bar\sigma^{\,n}\varphi(\Box)=\varphi(\sigma^{2n}\Box)$, hence admits a
\emph{lift}: a square patch $L$ over the region of $Q_k$ with $\varphi(L)=Q_k$
(take the corresponding part of $\sigma^{2n}\Box$). Form the tree whose level-$k$
nodes are the lifts of $Q_k$, each joined to its restriction over $Q_{k-1}$.
Over a fixed region there are only finitely many square patches, so the tree is
finitely branching; being infinite, it has an infinite branch (König's
lemma~\cite{koenig}), and the union of the lifts along it is a tiling $T$ with
$\varphi(T)=C$. Every
fragment of $T$ lies in a square supertile, so $T$ is a substitution tiling and
hence correct; thus $C$ is the image of a correct tiling.

\smallskip
\noindent\emph{(3) Uniqueness of the preimage up to the separating lines.}
Let $\varphi(T)=C$. The arrows of $C$ are the $C_4$ axes of $T$, so the oriented
diagonals of all squares of $T$ are prescribed by $C$. By the intertwining
(Remark~\ref{rem-commute}), $\varphi$ carries the unique composition of $T$
(Theorem~\ref{th2}) to that of $C$, so the decomposition of $T$ into square
supertiles at every level is the one read off from $C$. Inside a square supertile
the remaining decoration is fixed by the substitution: a triangle is red or
green according as it is a left or a right child at each level, and an edge has
the orientation the substitution gives it. Thus every edge of $T$ that eventually
lies in the interior of some square supertile is determined by $C$.

Recall that $C$ is a union of one, two, or four infinite supertiles. If $C$ is a
single infinite supertile, $C=\bigcup_n A_n$ with $A_n$ a level-$n$
arrowed-square supertile and $A_0\subset A_1\subset\cdots$; the square supertiles
of $T$ lying over the $A_n$ then exhaust the plane, so every edge of $T$ is
eventually interior to one of them, and $T$ is \emph{uniquely} determined.
Otherwise $C$ is two half-planes meeting along a bi-infinite line, or four
quadrants meeting at a point (two perpendicular lines); the only edges interior
to no square supertile are those lying on these separating lines --- four rays
from the point in the quadrant case. Off the separating lines $T$ is determined
by $C$, so any two preimages of $C$ agree except possibly in the colors and
orientations of the edges on the separating lines. Hence the preimage of $C$ is
unique up to the colors and orientations of the edges on its separating lines,
and is unique outright precisely when $C$ is a single infinite supertile.
\end{proof}

\paragraph{Relation to the Trilobite and Crab.}
Since the chair itself is not enforced by local rules, it is natural to look for
a tiling space defined by local matching rules that maps almost one-to-one onto
the chair; Theorem~\ref{th-chair} shows that our family is such a space. It is
not the first one. Goodman-Strauss~\cite{gscrab} constructed the
\emph{Trilobite and Crab}, a pair of tiles whose matching rules force the chair
(L-tromino) substitution hierarchy; this fits the general framework of
\cite{gs} relating substitution tilings and matching rules. The two
constructions differ, however, in what exactly they enforce. The Trilobite and
Crab, like the general construction of~\cite{gs}, force the substitution
\emph{hierarchy}: every admitted tiling is hierarchical. Forcing the hierarchy
is in general weaker than capturing the substitution family itself, since rules
of this kind typically admit additional non-substitution tilings along fault
lines~\cite{frank,gsrev}; the Trilobite and Crab, in particular, realize all
chair substitution tilings but also admit a non-substitution tiling, in which
two supertiles meet along a bi-infinite line carrying a patch that occurs in no
supertile (C.~Goodman-Strauss, personal communication, 2016). By contrast, our
rules are \emph{perfect}
(Corollary~\ref{cor-perfect}): they admit exactly the substitution tilings, and
$\varphi$ carries this family \emph{onto} the entire family of chair tilings
(Theorem~\ref{th-chair}). We do not know whether the two constructions are
related beyond both factoring almost one-to-one onto the chair, and we leave
this as a question for further work.

\section{Acknowledgments. }
The author is sincerely grateful to the participants of the Kolmogorov seminar and of the 
International academic conference “Graphs, Games and Models'' (October 12-15, 2022, Maikop, Adygea)
for helpful discussions.

\end{document}